\documentclass{article}
\usepackage[margin=1.5in]{geometry}
\usepackage{graphicx} % Required for inserting images
\usepackage{amsthm}
\usepackage{amssymb}
\usepackage{amsmath} 
\usepackage{mathtools}
\usepackage{xcolor}
\usepackage{enumerate}
\usepackage{microtype}
\usepackage{hyperref}
\usepackage[natbib,style=numeric,sorting=nyt]{biblatex}
\newcommand{\bin}[2]{\binom{#1}{#2}}
\newcommand{\tow}{\operatorname{tow}}

\newtheorem{theorem}{Theorem}

\newtheorem{lemma}[theorem]{Lemma}

\if10     % 11 = on,  10 = off
\usepackage[mathlines]{lineno}
\newcommand*\patchAmsMathEnvironmentForLineno[1]{%
  \expandafter\let\csname old#1\expandafter\endcsname\csname #1\endcsname
  \expandafter\let\csname oldend#1\expandafter\endcsname\csname end#1\endcsname
  \renewenvironment{#1}%
     {\linenomath\csname old#1\endcsname}%
     {\csname oldend#1\endcsname\endlinenomath}}% 
\newcommand*\patchBothAmsMathEnvironmentsForLineno[1]{%
  \patchAmsMathEnvironmentForLineno{#1}%
  \patchAmsMathEnvironmentForLineno{#1*}}%
\AtBeginDocument{%
\patchBothAmsMathEnvironmentsForLineno{equation}%
\patchBothAmsMathEnvironmentsForLineno{align}%
\patchBothAmsMathEnvironmentsForLineno{flalign}%
\patchBothAmsMathEnvironmentsForLineno{alignat}%
\patchBothAmsMathEnvironmentsForLineno{gather}%
\patchBothAmsMathEnvironmentsForLineno{multline}%
}
\linenumbers
\fi
\def\inst#1{$^{#1}$}

\title{On edge-ordered Ramsey numbers of hypergraphs\thanks{Both authors were supported by grant no.\ 23-04949X of the Czech Science Foundation (GA\v{C}R).
J. Glišić was also supported by the Charles University Grant Agency (GAUK) project no.\ 450526.
}}
\author{
Martin Balko\inst{1}
\and 
Jelena Gli\v{s}i\'{c}\inst{1}
}

\begin{document}

\maketitle

\begin{center}
{\footnotesize
\inst{1} 
Department of Applied Mathematics, \\
Faculty of Mathematics and Physics, Charles University, Czech Republic \\
\texttt{\{balko,glisic\}@kam.mff.cuni.cz}
}
\end{center}

\begin{abstract}
For an integer $h\geq2$, an \emph{edge-ordered $h$-uniform hypergraph} $H^<$ is an $h$-uniform hypergraph with a linear ordering of its hyperedges.
For an integer $q \geq 1$, the \emph{edge-ordered Ramsey number} $R_{\mathrm{edge}}(H^<;q)$ of $H^<$ is the smallest positive integer $N$ such that there exists an edge-ordered $h$-uniform hypergraph $G^<$ on $N$ vertices such that, for every $q$-coloring of the hyperedges of $G^<$, there is a monochromatic copy of $H^<$ in $G^<$ preserving the hyperedge order. 
In this note, we show that, for every edge-ordered $h$-uniform hypergraph $F^<$ on $n$ vertices with $m\geq1$ hyperedges, $R_{\mathrm{edge}}(F^<;q)
\leq R_h(F;q)^{Cm^2}$ where $F$ is the underlying unordered hypergraph, $R_h(F;q)$ is its standard $q$-color Ramsey number, and $C=C(h,q)>0$ is a constant.
For fixed $h\geq3$ and $q\geq2$, this shows that prescribing the hyperedge order does not require an increase in the tower height of the classical upper bound.
It also settles a problem posed by Fox and Li (2020), and for $h=2$ gives the bound $2^{O(n^4\log n)}$, improving their bounds for every fixed $q \geq 3$.
\end{abstract}

\section{Introduction}

Let $h\geq2$ and $q\geq1$ be integers and $H$ an $h$-uniform hypergraph.
The \emph{Ramsey number} $R_h(H;q)$ is the minimum positive integer $N$ such that every $q$-coloring of the edges of the complete $h$-uniform hypergraph $K^{(h)}_N$ on $N$ vertices contains a monochromatic copy of~$H$.
We write $R_h(n;q)=R_h(K_n^{(h)};q)$.

The growth rate of Ramsey numbers is a classical and extensively studied topic in combinatorics, with many applications in various fields of mathematics.
For fixed $h\geq3$ and $q\geq2$, classical bounds~\cite{erdRado52,ramseyBook} give $\tow_{h-1}(\Omega(n^2))\leq R_h(K_n^{(h)};q)\leq\tow_h(O(n))$, with the lower bound strengthened to $\tow_h(\Omega(n))$ for $q\geq4$, where the \emph{tower function} $\tow_k(\cdot)$ of height $k$ is defined as $\tow_1(x)=x$ and $\tow_{k+1}(x)=2^{\tow_k(x)}$.

Natural refinements, motivated by problems from discrete geometry, arise when the hypergraphs are equipped with linear orders of vertices~\cite{bckk20,cfls17} or hyperedges~\cite{balViz20} and embeddings are required to preserve these orders in addition to the incidence structure.
For vertex-ordered graphs, this requirement can substantially change the growth of Ramsey numbers, as even ordered matchings may have superpolynomial Ramsey numbers~\cite{bckk20,cfls17}.
The study of vertex-ordered Ramsey numbers has developed into an active area of research, with a substantial body of results concerning both general bounds and specific classes of ordered graphs; see the recent survey~\cite{balkoSurvey}.

Graphs with linearly ordered edges, called \emph{edge-ordered graphs}, have attracted increasing attention in extremal combinatorics, with a growing body of results concerning their Tur\'an numbers~\cite{gerbnerEtAlTuran,kucheriyaTardosAlmostLinear,tardos19}.
Balko and Vizer~\cite{balViz20} initiated the study of Ramsey numbers of edge-ordered graphs and proved their finiteness using the Graham--Rothschild theorem.
More generally, for an edge-ordered $h$-uniform hypergraph $H^<$, the \emph{edge-ordered Ramsey number} $R_{\mathrm{edge}}(H^<;q)$ is the minimum number of vertices in an edge-ordered $h$-uniform hypergraph such that every $q$-coloring of its hyperedges contains a monochromatic copy of $H^<$ preserving the hyperedge order.
Note that the edge order of the host is chosen as part of the construction, before the coloring.

The edge-ordered Ramsey theory remains a relatively young area, with many basic questions concerning the growth of its Ramsey numbers still open.
Fox and Li~\cite{foxLi20} proved the bound 
\begin{equation}
\label{eq-FoxLi}
R_{\mathrm{edge}}(G^<;q)\in 2^{O(n^{2q-2}(\log n)^q)}
\end{equation}
for every $n$-vertex edge-ordered graph $G^<$ and every fixed $q\geq2$, as well as polynomial bounds in the two-color case for graphs of bounded degeneracy.
For higher uniformities, the finiteness of edge-ordered Ramsey numbers follows from a general structural Ramsey theorem of Hubi\v{c}ka and Ne\v{s}et\v{r}il~\cite[Section~4.3.4]{hubNes19}, which allows linear orders on both the vertices and the tuples in each relation of a finite relational structure.
Their proof is constructive, but the resulting upper bounds are enormous, as noted by Fox and Li~\cite[Section~8.4]{foxLi20}, who explicitly raised the problem of obtaining better quantitative estimates for edge-ordered hypergraph Ramsey numbers, already in the two-color case.

A natural quantitative refinement of this problem is to ask whether, for fixed uniformity $h$ and number of colors $q$, the edge-ordered Ramsey numbers of $n$-vertex hypergraphs can be bounded by a tower of exponentials whose height is independent of $n$.
More precisely, the classical upper bound $R_h(n;q)\leq\tow_h(O(n))$ provides a natural benchmark.
Can an arbitrary prescribed order on the hyperedges be accommodated without increasing the height of this tower?

\section{Our results}

In this note, we address the problem posed by Fox and Li~\cite{foxLi20}, and we give a better
estimate for hypergraph edge-ordered Ramsey numbers.
Our main result gives an explicit upper bound for the edge-ordered Ramsey number of an arbitrarily edge-ordered $h$-uniform hypergraph $F^<$ on $n$ vertices with $m$ hyperedges in terms of the classical Ramsey number $R_h(F;q)$ of its underlying unordered hypergraph~$F$.

\begin{theorem}
\label{thm-main}
For all integers $h\geq2$ and $q \geq 1$, there is a
number $C=C(h,q)>0$ such that, for every positive integer $n$ and every
edge-ordered $h$-uniform hypergraph $F^<$ on $n$ vertices with $m\geq1$ hyperedges,
\[
 R_{\mathrm{edge}}(F^<;q)\leq \left(2^{m^2\ln(m+1)} \cdot R_h(F;q)^{mn}\right)^C.
\]
We also have $R_{\mathrm{edge}}(F^<;q)\leq R_h(F;q)^{C m^2}$.
In particular, there is a number $A=A(h,q)$ such that
\[
 R_{\mathrm{edge}}(F^<;q)\leq
 \begin{cases}
  2^{A n^4\ln{n}},& h=2,\\
  \tow_h(A n),& h\geq3.
 \end{cases}
\]
\end{theorem}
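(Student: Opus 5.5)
We construct the host hypergraph explicitly and then pass to a monochromatic copy of $F$ by an iterated product or blow-up argument.

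Set $N=R_h(F;q)$ and fix an arbitrary ordering $e_1<\dots<e_m$ of the hyperedges of $F^<$. The guiding idea is to place, for each target position $i$, a separate layer containing many candidate images of $e_i$, and to order the host so that all layer-$i$ edges precede all layer-$(i+1)$ edges. Then any copy of $F$ that uses exactly one edge from each layer, with the edge from layer $i$ playing the role of $e_i$, automatically preserves the order.

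For the first bound I plan the following construction. Take vertex set $V=[N]^{n}\times[M]$ (or a suitable product). Choose the host $G$ to be the union over $i$ of hypergraphs $G_i$, where $G_i$ consists of the $h$-sets arising as images of $e_i$ under "coordinatewise" embeddings. Order $G$ lexicographically: first by the layer index $i$, then arbitrarily within each layer.

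A $q$-coloring of $G$ induces a coloring of the embeddings. I will apply a product Ramsey argument: iterate over the $m$ hyperedges, using $R_h(F;q)$ in each coordinate to find a monochromatic structure. Each step costs a factor roughly $N^{n}$. The pigeonhole over the $q^m$ color patterns and over orderings of the $m$ edges costs at most $m!\,q^m$ and the like, which is the source of the factor $2^{m^2\ln(m+1)}$. The result is a copy of $F$ in which every edge $e_i$ lies in layer $i$, all with the same color.

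Concretely, I would use a Graham–Rothschild/Hales–Jewett-free "product" argument. Let the host be the $m$-fold product of $K_N^{(h)}$. An edge of layer $i$ is an $h$-set whose projection onto coordinate $i$ is an edge and whose other coordinates are constant on appropriate blocks. One then applies Ramsey's theorem in each coordinate in turn, choosing sub-cubes. The total vertex count is $N^{O(mn)}$, times the overhead from the order patterns.

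For the second bound, $R_h(F;q)^{Cm^2}$, I would note that $N\ge m$ in nontrivial cases, or at least $2^{m}\le N^{C}$ for $h\ge2$ via the classical lower bounds $N\ge 2^{\Omega(\sqrt m)}$ or similar. I am unsure about this step. More safely, since $mn\le O(m^2)$ when $F$ has no isolated vertices (isolated vertices can be deleted without affecting either side essentially), and $\ln(m+1)\le \log N$ because $N\ge n\ge m^{1/h}$, the first bound gives $N^{O(m^2)}$.

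The final specializations then follow:
\begin{itemize}
\item For $h=2$: $m\le n^2$ and $N\le 2^{O(n)}$ give $2^{O(n^4\log n)}$; the $\log n$ comes from the $m^2\ln m$ term.
\item For $h\ge3$: $N\le\tow_h(O(n))$ and $m^2\le n^{2h}$, so $N^{Cm^2}=2^{Cm^2\tow_{h-1}(O(n))}\le\tow_h(An)$.
\end{itemize}

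The main obstacle is the core embedding step. We must design the layered host so that a monochromatic copy of the unordered $F$ can be forced while keeping each $e_i$ in its own layer, with only a polynomial-in-$N$ cost per hyperedge. I would first try an inductive argument on $i$: maintain a large "clean" set of partial embeddings of $e_1,\dots,e_{i-1}$ with a fixed color pattern. Then I would use $N$-Ramsey inside the fibers to extend, at the end taking the majority color pattern and forcing all colors equal via one more application of $R_h(F;q)$ on a diagonal.
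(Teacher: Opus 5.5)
\textbf{There is a genuine gap, at exactly the step you flag as ``the main obstacle,'' and the layered host you describe cannot work.}

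Suppose the host is the union of $m$ layers $G_1,\dots,G_m$, with all edges of $G_i$ preceding all edges of $G_{i+1}$. Then the layers are pairwise disjoint edge sets. The adversary can simply color every edge of $G_1$ with color $1$ and every other edge with color $2$. No copy of $F$ with $e_i$ in layer $i$ is then monochromatic, however rich the layers are. So no iterated product Ramsey argument on this host can force the color pattern $(c_1,\dots,c_m)$ to be constant, and neither can a final ``diagonal'' application of $R_h(F;q)$: the adversary controls that pattern. Two further points are unsupported. Iterated product Ramsey arguments typically raise the number of colors exponentially at each step, so a cost of only $N^{O(n)}$ per step is not justified. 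And the overhead $m!\,q^m$ is only $2^{O(m\log m)}$, so it is not the source of the factor $2^{m^2\ln(m+1)}$; in the paper that factor comes from $c_m^{4m}$ with $c_m=800(m!)^3m$.

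\textbf{The missing idea is that you need many layers, not $m$.} The paper takes $R=R_h(F;q)$ and $T=q(m-1)a\binom{R}{n}+1$ labels. Applying Ramsey's theorem separately at each label and then pigeonholing over labels gives $m$ labels $\ell_1<\dots<\ell_m$ at which the same copy of $F$ has the same color. However, this argument needs every $h$-set to be available at every label, whereas in a host each hyperedge has only one position in the order.

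The paper bridges this as follows.
\begin{enumerate}
\item It works in $\Omega=\binom{[N]}{h}\times[T]$ with the $m$-uniform hypergraph $\mathcal H$ whose edges are the label-increasing copies of $F^<$.
\item The pigeonhole argument above becomes a supersaturation step: any $q$ containers from the Saxton--Thomason theorem together miss at least $\binom{N}{h}/(2\binom{R}{h})$ elements of $\Omega$.
\item A uniformly random label for each $h$-set then escapes every union of $q$ containers, by a union bound against $|\mathcal C|^q$.
\item Every color class containing no edge of $\mathcal H$ lies in a container, so some color class contains an order-preserving copy.
\end{enumerate}

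Your derivation of the second bound and of the $h=2$ and tower specializations from the first bound is fine and matches the paper; it uses $n\le hm$, $m\le n^h$, and $\tow_k(x+t)\ge 2^t\tow_k(x)$. But it rests on a first bound that your argument does not establish.
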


For $h=2$, our bound $2^{A n^4\ln n}$ improves the multicolor bound~\eqref{eq-FoxLi} of Fox and Li~\cite{foxLi20} for every fixed $q\geq3$, whereas their two-color bound $2^{O(n^2(\log n)^2)}$ is stronger.
In particular, for fixed $h\geq3$ and $q\geq2$, Theorem~\ref{thm-main} yields the bound $\tow_h(O(n))$, showing that prescribing the hyperedge order does not require an increase in the tower height of the classical upper bound.
For a complete $F^<$, we have $R_{\mathrm{edge}}(F^<;q)\geq R_h(n;q)\geq n$, so, taking $C\geq1$, Theorem~\ref{thm-main} also gives, for fixed $h,q\geq2$,
\[
 \begin{aligned}
 0&\leq\ln\ln R_{\mathrm{edge}}(F^<;q)-\ln\ln R_h(n;q)\\
 &\leq\ln C+2\ln m\leq\ln C+2h\ln n.
 \end{aligned}
\]
The relative error therefore tends to zero whenever $\ln\ln R_h(n;q)=\omega(\log n)$.
Together with the classical lower bounds above and the bound $R_3(n;3)\in 2^{n^{\Omega(\log n})}$ of Conlon, Fox, and Sudakov~\cite{cfs10}, this implies
\[
 \ln\ln R_{\mathrm{edge}}(F^<;q)=(1+o(1))\ln\ln R_h(n;q)
\]
for fixed $h\geq4$ and $q\geq2$, or $h=3$ and $q\geq3$, uniformly over all edge orders of $F^<$.
Thus, in these cases the comparison with the classical Ramsey number is asymptotically exact on the double-logarithmic scale.

The proof of Theorem~\ref{thm-main} is based on the hypergraph container method~\cite{ST} and is inspired by the work of Conlon, Dellamonica, La Fleur, R\"{o}dl, and Schacht~\cite{inducedRamsey}, who used this method to bound induced hypergraph Ramsey numbers in terms of their classical counterparts.
We assign labels to host hyperedges and encode copies with labels increasing in the prescribed edge order as hyperedges of an auxiliary hypergraph.
The container theorem controls color classes avoiding these copies, and a random choice of one label for each host hyperedge produces, with positive probability, an edge order for which such classes cannot cover the host.

\section{Preliminaries}

For a positive integer $n$, we use $[n]$ to denote the set $\{1,\dots,n\}$.
We use $\ln$ for the natural logarithm and $\log$ for the logarithm to base~$2$. 
We omit ceiling and floor signs whenever they are not crucial.

For an $m$-uniform hypergraph $\mathcal H$ on $v$ vertices with average degree $d>0$, we let the \emph{codegree} $d_{\mathcal H}(S)$ of $S$ be the number of hyperedges of $\mathcal H$ containing $S$.
For a vertex $x$ of $\mathcal{H}$, $\tau \in (0,1/2)$, and each integer $j$ with $2 \leq j \leq m$, we write
\[
 d^{(j)}(x)=\max\{d_{\mathcal H}(S):x\in S\subseteq V(\mathcal H),\ |S|=j\},
\quad
 \delta_j=\frac{1}{\tau^{j-1}vd}\sum_{x\in V(\mathcal H)}d^{(j)}(x),
\]
and
\[
 \delta(\mathcal H,\tau)
 =2^{\bin{m}{2}-1}\sum_{j=2}^m2^{-\bin{j-1}{2}}\delta_j.
\]

The proof of Theorem~\ref{thm-main} is based on the following result by Saxton and Thomason~\cite{ST}, called the \emph{Container theorem}; see Corollary 3.6 in~\cite{ST} and the paragraph immediately following it.

\begin{theorem}[The Container theorem~\cite{ST}]
\label{thm-container}
For an $m$-uniform hypergraph $\mathcal H$ on $v$ vertices with average degree $d>0$, if $0<\eta,\tau<1/2$ and
$\delta(\mathcal H,\tau)\leq\eta/(12m!)$, then there is a family $\mathcal C$ of
subsets of $V(\mathcal H)$ satisfying the following three conditions:
\begin{enumerate}[(a)]
\item\label{item-container1} Every independent set in $\mathcal H$ is contained in some $C\in\mathcal C$.
\item\label{item-container2} $e(\mathcal H[C])\leq\eta e(\mathcal H)$ for every $C\in\mathcal C$.
\item\label{item-container3} $\ln|\mathcal C|\leq c_m\ln(1/\eta)v\tau\ln(1/\tau)$,
where $c_m=800(m!)^3m$ is permissible.
\end{enumerate}
\end{theorem}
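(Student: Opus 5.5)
The plan is to obtain this statement directly from the container machinery of Saxton and Thomason, rather than re-derive the container algorithm. It is their Corollary~3.6 in~\cite{ST}, together with the constant given in the remark after it. So the main work is translating their notation and checking that the hypotheses match. First I would check that our codegree quantities $d^{(j)}(x)$, $\delta_j$ and $\delta(\mathcal H,\tau)=2^{\binom m2-1}\sum_{j=2}^m 2^{-\binom{j-1}2}\delta_j$ are exactly the ones in~\cite{ST}: $r=m$ is the uniformity, and the average degree $d=m\,e(\mathcal H)/v$ is normalized in the same way. Then the hypothesis $\delta(\mathcal H,\tau)\le \eta/(12m!)$ becomes their hypothesis with $\epsilon=\eta$.

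To explain where the statement comes from, I would recall the iterative argument behind Corollary~3.6. The basic container lemma (their Theorem~3.4 / Corollary~3.5) produces, for each independent set $I$, a fingerprint $T\subseteq I$ with $|T|\le O_m(\tau v)$. The fingerprint determines a set $C(T)\supseteq I$ that either has size at most $(1-c)v$ or spans few edges. The number of possible fingerprints is at most $\binom{v}{\le O(\tau v)}$, whose logarithm is $O_m(v\tau\ln(1/\tau))$.

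One then iterates on the induced subhypergraphs $\mathcal H[C]$ until $e(\mathcal H[C])\le \eta e(\mathcal H)$. Each round shrinks the edge count by a constant factor, so $O_m(\ln(1/\eta))$ rounds suffice. Multiplying the numbers of choices over the rounds gives~(\ref{item-container3}) with a constant $c_m$. Property~(\ref{item-container1}) holds because each $I$ is followed down a branch of containers, and~(\ref{item-container2}) is the stopping rule.

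The main obstacle is keeping the codegree hypothesis valid for $\mathcal H[C]$ throughout the iteration. Codegrees can only decrease under passing to induced subhypergraphs. However, the normalization $vd$ in $\delta_j$ shrinks, since $v d$ is proportional to $e(\mathcal H[C])$. As long as $e(\mathcal H[C])>\eta e(\mathcal H)$, the quantity $\delta(\mathcal H[C],\tau)$ is at most $\delta(\mathcal H,\tau)/\eta\le 1/(12m!)$, which is what the basic lemma needs. This is precisely why $\eta$ appears in the hypothesis, and I would verify it first. The remaining check is the constant $c_m=800(m!)^3m$, which I would take from the paragraph after Corollary~3.6. I would also note that switching between base-$2$ and natural logarithms only affects constants and is absorbed there.
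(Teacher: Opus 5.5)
Your proposal matches the paper, which likewise gives no proof of its own and simply invokes Corollary~3.6 of~\cite{ST} with the constant $c_m=800(m!)^3m$ from the paragraph after it. Your notational check, together with your observation that $\delta(\mathcal H[C],\tau)<\delta(\mathcal H,\tau)/\eta\le 1/(12m!)$ whenever $e(\mathcal H[C])>\eta e(\mathcal H)$, correctly explains why $\eta$ appears in the hypothesis.

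One small correction to your explanatory sketch. The basic step needs to bound the \emph{degree measure} $\frac{1}{vd}\sum_{x\in C}d_{\mathcal H}(\{x\})$ of the container by $1-c$, not just its size; as far as I recall, this is how~\cite{ST} phrase it. That bound is what forces $e(\mathcal H[C])\le(1-c)e(\mathcal H)$. A bound $|C|\le(1-c)v$ alone would not make the edge count drop when $\mathcal H$, or a later $\mathcal H[C]$, is irregular.
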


Intuitively, the container theorem gives a compact description of all independent sets in the hypergraph $\mathcal{H}$. 
Every independent set lies inside one of relatively few \emph{containers} $C \in \mathcal{C}$, each spanning only a small portion of the hyperedges of $\mathcal{H}$.
The codegree condition $\delta(\mathcal H,\tau)\leq\eta/(12m!)$ ensures that hyperedges of $\mathcal{H}$ do not overlap too heavily on small vertex sets. 
The parameter $\eta$ controls the portion of hyperedges in each container, while $\tau$ controls the number of containers. 

In typical applications, one constructs a hypergraph whose hyperedges represent forbidden configurations, so that independent sets correspond to the objects under consideration. 
A supersaturation argument then converts the bound on the number of hyperedges in each container into
a bound on its size. 
One can subsequently count subsets of the containers
or use a union bound over the container family in a probabilistic argument.

\section{Proof of Theorem~\ref{thm-main}}

For integers $q \geq 2$, $h \geq 2$, and $n \geq h+1$, let $F^<$ be an edge-ordered $h$-uniform hypergraph with the vertex set $[n]$ and assume first that it has no isolated vertices.
We use $m\geq2$ to denote the number of hyperedges of~$F^<$, which are ordered as $f_1<\cdots<f_m$.
Let $a=n!/|\operatorname{Aut}(F^<)|$, where $\operatorname{Aut}(F^<)$ consists of the vertex permutations that preserve every hyperedge $f_i$ as a set, and put $\rho=\binom{n}{h}/m$.
We set $R = R_h(F;q)$ and $T=q(m-1)a\binom{R}{n}+1$.
The choice of $T$ will force one monochromatic ordered edge tuple to recur at $m$ labels in any $q$-coloring of the labeled hyperedges on an $R$-set.
We assume that $N$ is an integer satisfying
\[N \geq \left(4q c_mT^2\binom{R}{h}\ln{\left(4qa\binom{T}{m}\binom{R}{n}\right)}\right)^{4m},\]
where $c_m$ is the parameter from Theorem~\ref{thm-container}.
This lower bound on $N$ is chosen to ensure both the codegree condition and the final union bound.
Note that $h+1\leq n\leq hm$, $\rho\geq1$, $R\geq n$, and $T\geq q(m-1)+1\geq m$.
Since $\binom{R}{h}\geq R$, $c_m>1$, and $\ln(4qa\binom{T}{m}\binom{R}{n})>1$, this choice also gives $N\geq R$.

We now aim to define the edge-ordered complete $h$-uniform hypergraph $K^<$ on $[N]$ such that every $q$-coloring of $K^<$ contains a monochromatic copy of $F^<$.
Let $\lambda \colon \binom{[N]}{h} \to [T]$ be an arbitrary assignment of labels from $[T]$ to hyperedges on $[N]$.
For each such $\lambda$, we define the complete edge-ordered $h$-uniform hypergraph $K^<(\lambda)$ on $[N]$ such that two hyperedges $e,f \in \binom{[N]}{h}$ satisfy $e<f$ if and only if $\lambda(e)<\lambda(f)$, or $\lambda(e)=\lambda(f)$ and $e$ is lexicographically smaller than $f$.
That is, we order the hyperedges from $\binom{[N]}{h}$ according to their labels selected by $\lambda$ while breaking ties using the lexicographical order.

In the proof, we show that there is a choice of $\lambda$ such that $K^<(\lambda)$ is the desired edge-ordering $K^<$ of $K^{(h)}_N$.
To do so, we convert the edge-order constraint into a hypergraph containment condition and then apply the Container theorem (Theorem~\ref{thm-container}).

We now define a suitable $m$-uniform hypergraph $\mathcal{H}$.
We first let $\Omega =\binom{[N]}{h}\times[T]$ be the set of hyperedges on $[N]$ that are labeled by numbers from $[T]$ that we eventually use to sort selected hyperedges and obtain the desired copy of $F^<$.
For each set $U$ of $n$ vertices from $[N]$, consider an arbitrary one-to-one correspondence $\phi:[n]\longrightarrow U$.
We extend this mapping to hyperedges by defining the mapping $\phi^{(h)} \colon \binom{[n]}{h} \to \binom{U}{h}$ given by $\phi^{(h)}(f) = \{\phi(v) \colon v \in f\}$.
For each set $L$ of $m$ labels $\ell_1<\cdots<\ell_m$ from $[T]$, we define the set
\[
 A(\phi,L)=\{(\phi^{(h)}(f_i),\ell_i):1\leq i\leq m\}\subseteq\Omega,
\]
which corresponds to a copy of $F^<$ on $U$ if we order the hyperedges $\phi^{(h)}(f_i)$ by the increasing labels~$\ell_i$.
Let $\mathcal H$ be the $m$-uniform hypergraph on $\Omega$ whose edges are all distinct sets $A(\phi,L)$.
The vertices of $\mathcal{H}$ correspond to possible labeled host hyperedges, while its hyperedges are possible copies of~$F^<$.

We first verify the codegree condition from Theorem~\ref{thm-container} for a suitable choice of the parameters $\eta$ and $\tau$ so that we can apply this result to our constructed $m$-uniform hypergraph~$\mathcal{H}$.

\begin{lemma}
\label{lem-codegree}
For $\tau = N^{-1/(2m)}$ and every $\eta \in (0,1/2)$, if 
\begin{equation}
\label{eq-lemCodegree}
N \geq \left(\frac{48 \cdot 2^{\binom{m}{2}}m\rho Tn m!}{\eta}\right)^2,
\end{equation}
then the constructed $m$-uniform hypergraph $\mathcal{H}$ satisfies the assumptions of Theorem~\ref{thm-container}.
\end{lemma}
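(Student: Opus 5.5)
We have to check that $\tau=N^{-1/(2m)}<1/2$ and that $\delta(\mathcal H,\tau)\le \eta/(12m!)$. The first is immediate, since \eqref{eq-lemCodegree} forces $N$ to be huge, so $N^{1/(2m)}>2$. For the second, we bound $v$, $d$ and the maximum codegrees $d^{(j)}(x)$, and compare them term by term. Here $v=|\Omega|=\binom{N}{h}T$, and $\mathcal H$ is $m$-uniform.

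First we count the edges of $\mathcal H$. An edge $A(\phi,L)$ is determined by $U$, by the images $\phi^{(h)}(f_i)$, and by $L$. Since $F^<$ has no isolated vertices, the hyperedges $\phi^{(h)}(f_1),\dots,\phi^{(h)}(f_m)$ recover $U$. They also recover $\phi$ up to automorphisms of $F^<$. Hence $e(\mathcal H)=\binom{N}{n}a\binom{T}{m}$ exactly. By symmetry every vertex $(e,\ell)$ has the same degree apart from a dependence on $\ell$, so it suffices to use the average degree
\[
d=\frac{m\,e(\mathcal H)}{v}=\frac{m\binom{N}{n}a\binom{T}{m}}{\binom{N}{h}T}.
\]

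Next we bound codegrees. Fix $S\subseteq\Omega$ with $|S|=j\ge2$, and suppose $S$ lies in some edge. Then $S$ contains at least two distinct host hyperedges $e,e'$, and $|e\cup e'|\ge h+1$. An edge containing $S$ is determined by three choices. The first is the remaining $n-|e\cup e'|\le n-h-1$ vertices of $U$, giving at most $N^{n-h-1}$ options. The second is the map $\phi$, giving at most $a$ choices modulo automorphisms; more crudely, at most $n!$ or $a\cdot m$ choices. The third is the remaining $m-j$ labels, giving at most $\binom{T}{m-j}\le T^{m-j}$ options. So
\[
d^{(j)}(x)\le a\,N^{n-h-1}T^{m-j}.
\]
Comparing with $d\approx a N^{n-h}T^{m-1}\cdot\frac{h!\,m}{n!\,m!}$, we get
\[
\frac{d^{(j)}(x)}{d}\le \frac{C' T^{1-j}}{N},
\]
where $C'$ is a factor polynomial in the parameters. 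Writing $\binom{n}{h}=m\rho$ should let the ratio $\binom{N}{n}/\binom{N}{h}$ versus $N^{n-h}$ be absorbed into a factor like $m\rho n$; I would track this carefully to land on $48\cdot 2^{\binom m2}m\rho Tnm!$. Then
\[
\delta_j=\frac{1}{\tau^{j-1}}\cdot\frac{1}{v}\sum_x\frac{d^{(j)}(x)}{d}\le \frac{C'}{N\tau^{j-1}}\le \frac{C'}{N\cdot N^{-(m-1)/(2m)}}\le \frac{C'}{\sqrt N}.
\]

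Finally we sum over $j$. This gives $\delta(\mathcal H,\tau)\le 2^{\binom m2-1}(m-1)\max_j\delta_j\le 2^{\binom m2}m\,C'/\sqrt N$, which is at most $\eta/(12m!)$ precisely when $\sqrt N\ge 12m!\,2^{\binom m2}mC'/\eta$. This is \eqref{eq-lemCodegree}, provided $C'\le 4\rho Tn$ or so.

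The main obstacle is getting the ratio $d^{(j)}(x)/d$ with exactly the constant that \eqref{eq-lemCodegree} allows. The key saving is the single factor $1/N$, which comes from two distinct host hyperedges spanning at least $h+1$ vertices. The rest is bookkeeping of $\binom{N}{n}$ against $N^{n-h-1}$; here I would use $\binom{N}{n}\ge (N/n)^{n-h}\binom{N}{h}\cdot(\text{ratio})$-type estimates. One subtlety is the factor $T$ appearing in \eqref{eq-lemCodegree} even though labels favour us. I expect it arises from averaging degrees over labels $\ell$, and from bounding $\binom{T}{m}/\binom{T}{m-j}$ from below only by $T^{j-1}/m^{\,\cdot}$-type quantities; I would accept a crude bound there.
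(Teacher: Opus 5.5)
Your outline is the same as the paper's proof: count $v=T\binom{N}{h}$ and $e(\mathcal H)=a\binom{N}{n}\binom{T}{m}$, gain a factor $1/N$ because two distinct host $h$-sets span $w\ge h+1$ vertices, use $\tau^{-(j-1)}\le N^{1/2}$, and sum over $j$. The gap is that you never do the one step that carries the content of this quantitative lemma, and the estimates you actually wrote do not give it.

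From $d^{(j)}(x)\le aN^{n-h-1}T^{m-j}$ and your $d\approx aN^{n-h}T^{m-1}\frac{h!\,m}{n!\,m!}$ you get $C'=\frac{n!\,m!}{h!\,m}=\rho\,(n-h)!\,m!$, not something of order $\rho Tn$. Even if you keep the factor $T^{1-j}\le T^{-1}$ that you drop, you would need $(n-h)!\,m!\le 4nT^2$. This is false in general. For $h=2$ and $F=K_n$ one has $m!=2^{\Theta(n^2\log n)}$, whereas $T\le qmR^n=2^{O(n^2)}$ since $R\le q^{qn}$. The factorials come exactly from your two crude steps:
\begin{itemize}
\item replacing $\binom{N-w}{n-w}\le\binom{N-h-1}{n-h-1}$ by $N^{n-h-1}$ loses a factor $(n-h-1)!$;
\item replacing $\binom{T-j}{m-j}$ by $T^{m-j}$ loses a factor $(m-j)!$.
\end{itemize}
So a crude bound is exactly what you cannot afford here. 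The heuristic $d\approx\cdots$ also has to become an exact expression or a genuine lower bound.

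The fix is to keep the binomial coefficients. From $\binom{N}{n}\binom{n}{h}=\binom{N}{h}\binom{N-h}{n-h}$ you get exactly $d=\frac{a\binom{T}{m}}{\rho T}\binom{N-h}{n-h}$. As in the paper, you may bound the number of admissible label sets simply by $\binom{T}{m}$. Your sharper count $\binom{T-j}{m-j}$ also works, since $T\binom{T-j}{m-j}/\binom{T}{m}=T\prod_{i=0}^{j-1}\frac{m-i}{T-i}\le T$. Then $d_{\mathcal H}(S)\le a\binom{T}{m}\binom{N-h-1}{n-h-1}$ and the $\binom{T}{m}$ cancels. Since $\binom{N-h-1}{n-h-1}/\binom{N-h}{n-h}=\frac{n-h}{N-h}$, you get $d^{(j)}(x)/d\le\rho T\frac{n-h}{N-h}\le\frac{2\rho Tn}{N}$, using $N\ge 2n$, which follows from the hypothesis. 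Thus $C'=2\rho Tn$, and
\[
\delta(\mathcal H,\tau)\le 2^{\binom{m}{2}}m\cdot 2\rho TnN^{-1/2}\le\frac{\eta}{24m!}.
\]
This also explains the factor $T$ you were unsure about. It does not come from averaging over labels; it is slack. The degree $d$ carries $\binom{T}{m}/T$ because each vertex of $\Omega$ has one fixed label, while the codegree count ignores the $j\ge 2$ labels already fixed by $S$.
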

\begin{proof}
Since $m\geq2$, $n\geq3$, and $\rho\geq1$, condition~\eqref{eq-lemCodegree} gives $N\geq(48n)^2\geq2n$ and $N\geq48^2 2^{m(m-1)}\geq2^{2m+1}$.
First, the number $v$ of vertices of $\mathcal{H}$ is $|\Omega| = T \binom{N}{h}$.
Moreover, the set of underlying vertices used by $A(\phi,L)$ is exactly $U$, because $F^<$ has no isolated vertices.
The set of labels used in $A(\phi,L)$ is exactly $L$, so both $U$ and $L$ are determined by this hyperedge of $\mathcal{H}$.
For fixed $(U,L)$, two bijections $\phi,\psi:[n]\longrightarrow U$ give the same hyperedge of $\mathcal{H}$ if and only if $\psi^{-1}\phi\in\operatorname{Aut}(F^<)$, so there are exactly $a$ distinct edges for each pair $(U,L)$.
Thus, the number of hyperedges of $\mathcal{H}$ is $a\binom{N}{n}\binom{T}{m}$.
Altogether, it follows that the average degree $d$ of $\mathcal H$ satisfies
\begin{equation}\label{eq-average}
 d=\frac{ma \binom{N}{n}\binom{T}{m}}{T\bin{N}{h}}
 =\frac {a\binom{T}{m}}{\rho T}\binom{N-h}{n-h},
\end{equation}
where we used $\rho=\bin{n}{h}/m$ and $\binom{N}{n}\binom{n}{h} = \binom{N}{h}\binom{N-h}{n-h}$.

Let $S\subseteq\Omega$ have size $j\geq2$. 
If $S$ contains two elements of $\Omega$ with the same underlying $h$-set, or with the same label, then it is not contained in a hyperedge of $\mathcal H$ and $d_\mathcal{H}(S)=0$. 
Otherwise, the underlying $h$-sets of $S$ are distinct, and their union has size $w\geq h+1$. 
If $w>n$, then $d_\mathcal{H}(S)$ is again zero.
If $w\leq n$, each hyperedge of $\mathcal H$ containing $S$ has its ground $n$-set containing that union, and there are at most $\binom{T}{m}$ possible label sets and $a$ edges for each such pair.
Thus, 
\[
 d_{\mathcal H}(S)\leq a\binom{T}{m}\binom{N-w}{n-w}
 \leq a\binom{T}{m}\binom{N-h-1}{n-h-1}.
\]
Together with \eqref{eq-average} and $N\geq2n$, this implies, for every vertex $x$ of $\mathcal{H}$ and every integer $j$ with $2\leq j\leq m$,
\begin{equation}\label{eq-codegree}
 \frac{d^{(j)}(x)}d
 \leq \rho T\frac{n-h}{N-h}\leq\frac{2\rho Tn}{N}.
\end{equation}
Since $N\geq2^{2m+1}$, the choice of $\tau$ implies $0<\tau<1/2$. 
By the definition of $\delta_j$ and \eqref{eq-codegree},
\[
 \delta_j\leq\frac{2\rho Tn}{N\tau^{j-1}}  \leq 2\rho TnN^{-1+(j-1)/(2m)}\leq 2\rho TnN^{-1/2}.
\]
It then follows from~\eqref{eq-lemCodegree} that
\[
 \delta(\mathcal H,\tau)
 \leq 2^{\bin{m}{2}}m2\rho TnN^{-1/2} \leq\frac{\eta}{24m!}<\frac{\eta}{12m!},
\]
and the assumptions of Theorem~\ref{thm-container} are satisfied.
\end{proof}

To ensure that the containers contain too few edges of $\mathcal H$ to account for copies on more than half of the $R$-sets, we now set
\begin{equation}
\label{eq-eta}
\eta = \left(4qa\binom{T}{m}\binom{R}{n}\right)^{-1}.
\end{equation} 
Then $0<\eta\leq1/(4q)\leq1/8$, so $\ln(1/\eta)>1$.
Observe that, by the choice of $T$,
\[
 \frac1\eta=\frac{4(T-1)}{m-1}\binom{T}{m}\leq\frac{4T^{m+1}}{m-1},
\]
and $m\rho=\binom{n}{h}$ and $n\leq\binom{n}{h}\leq\binom{R}{h}$ give $m\rho n\leq\binom{R}{h}^2$.
Also, $m\geq2$ and $c_m>2^m m \cdot m!$ imply $(4qc_m)^{2m}\geq192\cdot2^{\binom{m}{2}}m!/(m-1)$.
Consequently, our choice of $N$ satisfies~\eqref{eq-lemCodegree}, since
\begin{align*}
 \frac{48\cdot2^{\binom{m}{2}}m\rho Tn m!}{\eta}
 &\leq\frac{192\cdot2^{\binom{m}{2}}m!}{m-1}\,T^{m+2}\binom{R}{h}^2\\
 &\leq(4qc_m)^{2m}T^{4m}\binom{R}{h}^{2m}\\
 &\leq\left(4q c_mT^2\binom{R}{h}\ln(1/\eta)\right)^{2m}\leq\sqrt{N}.
\end{align*}
It now follows from Lemma~\ref{lem-codegree} and Theorem~\ref{thm-container} applied with $\eta$ that there is a family $\mathcal C$ of subsets of $V(\mathcal H)$ satisfying the three properties stated in Theorem~\ref{thm-container}.
We first show that every union of $q$ containers from $\mathcal{C}$ misses many elements of $\Omega$.

\begin{lemma}
\label{lem-missing}
For any $C_1,\ldots,C_q\in\mathcal C$, we have
\begin{equation}\label{eq-missing}
 \left|\Omega\setminus\left(\bigcup_{i=1}^qC_i\right)\right|\geq\frac{\binom{N}{h}}{2\binom{R}{h}}.
\end{equation}
\end{lemma}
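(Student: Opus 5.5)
We prove the lemma by a supersaturation argument over $R$-subsets of $[N]$, combined with a pigeonhole over labels; this is exactly what the choice of $T$ and $\eta$ was made for. Fix $C_1,\dots,C_q\in\mathcal C$ and let $M=\Omega\setminus\bigcup_i C_i$ be the set of missed elements. We call an $R$-set $W\subseteq[N]$ \emph{good} if no element of $M$ has its underlying $h$-set inside $W$, that is, if $\binom{W}{h}\times[T]\subseteq\bigcup_i C_i$.

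\textbf{Step 1: a good $R$-set forces many hyperedges of $\mathcal H$ inside one container.} Let $W$ be good. For each pair $(e,\ell)\in\binom{W}{h}\times[T]$, pick an index $i$ with $(e,\ell)\in C_i$. This gives, for each label $\ell$, a $q$-coloring of $\binom{W}{h}$. Since $|W|=R=R_h(F;q)$, each of these $T$ colorings contains a monochromatic copy of $F$. Such a copy is described by a color in $[q]$ together with an edge tuple $(\phi^{(h)}(f_1),\dots,\phi^{(h)}(f_m))$, and there are at most $q\,a\binom{R}{n}$ such descriptions. Because $T=q(m-1)a\binom{R}{n}+1$, pigeonhole gives one description that appears for at least $m$ labels $\ell_1<\dots<\ell_m$. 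The corresponding set $A(\phi,\{\ell_1,\dots,\ell_m\})$ is then a hyperedge of $\mathcal H$ lying entirely in one container $C_i$. We strengthen this to a count: after deleting the labels used, pigeonhole can be repeated as long as at least $T-q(m-1)a\binom{R}{n}$ labels remain. So we expect each good $W$ to produce at least a constant fraction of $\binom{T}{m}/\bigl(qa\binom{R}{n}\bigr)$ hyperedges $A(\phi,L)$ with ground set in $W$, all inside $\bigcup_i\mathcal H[C_i]$. The cleaner route is to count over label $m$-sets directly. Restrict to any $T'=q(m-1)a\binom{R}{n}+1$ labels; pigeonhole gives one hyperedge of $\mathcal H$ inside some container whose labels lie in that subset. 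Double counting over subsets then yields at least $\binom{T}{m}/\binom{T'}{m}$ times a factor which, by the choice of $T=T'$, equals $1$. So in the end we only need one hyperedge per good $W$, and the averaging is done over $W$.

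\textbf{Step 2: double count.} Each hyperedge of $\mathcal H$ has a ground $n$-set, which lies in exactly $\binom{N-n}{R-n}$ sets $W$. If the fraction of good $R$-sets is at least $1/2$, the number of hyperedges of $\mathcal H$ inside $\bigcup_i\mathcal H[C_i]$ is at least $\frac12\binom{N}{R}/\binom{N-n}{R-n}=\frac12\binom{N}{n}/\binom{R}{n}$. On the other hand, by Theorem~\ref{thm-container}(\ref{item-container2}) it is at most $q\eta\,e(\mathcal H)=q\eta a\binom{N}{n}\binom{T}{m}=\binom{N}{n}/\bigl(4\binom{R}{n}\bigr)$, by~\eqref{eq-eta}. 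This is a contradiction, so at least half of all $R$-sets are bad.

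\textbf{Step 3: from bad sets to missed elements.} Each bad $W$ contains the underlying $h$-set of some element of $M$. Any fixed $h$-set lies in $\binom{N-h}{R-h}$ sets $W$. Hence $|M|\binom{N-h}{R-h}\geq\frac12\binom{N}{R}$, which gives $|M|\geq\binom{N}{h}/\bigl(2\binom{R}{h}\bigr)$, exactly~\eqref{eq-missing}. The main obstacle is Step 1: one has to make sure the pigeonhole yields $m$ \emph{distinct} labels carrying the same monochromatic tuple in the same container, so that the resulting set is genuinely a hyperedge $A(\phi,L)$ of $\mathcal H$ inside a single $C_i$. This is where the exact value of $T$ is used, and a single hyperedge per good $R$-set suffices.
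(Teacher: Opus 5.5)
Your proof is correct and is essentially the paper's argument. You use the same pigeonhole over the $T$ labels to get one monochromatic $A(\phi,L)$ inside a single $C_i$ for each good $R$-set, and the same double count against $q\eta e(\mathcal H)=\binom{N}{n}/(4\binom{R}{n})$; your Step~3 is the paper's union bound run forward rather than by contradiction. The digression in Step~1 about extracting many hyperedges per good $W$ is muddled and unnecessary, but harmless, since you end up using one hyperedge per good set, exactly as the paper does.
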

\begin{proof}
Let $D=\Omega\setminus\left(\bigcup_{i=1}^qC_i\right)$ and suppose for contradiction that $|D|<\binom{N}{h}/(2\binom{R}{h})$. 
A uniformly chosen $R$-set $W\subseteq[N]$ contains the
underlying $h$-set of any fixed missing element of $\Omega$ with probability
\[
 \frac{\bin{N-h}{R-h}}{\bin{N}{R}}
 =\frac{\bin{R}{h}}{\bin{N}{h}}.
\]
The union bound with $|D|<\binom{N}{h}/(2\binom{R}{h})$ therefore implies that more than half of the $R$-sets $W$ have every element of $\bin{W}{h}\times[T]$ in $\bigcup_i C_i$.
We call such $R$-sets \emph{good}. 
On each good $W$, color each element of $\bin{W}{h}\times[T]$ with the least
index $i$ for which it lies in $C_i$. 
Since $W$ is good, this is a valid coloring $\chi$ of $\bin{W}{h}\times[T]$ with $q$ colors.

We now show that there are an injection $\phi:[n]\longrightarrow W$ and $L \in\binom{[T]}{m}$ such that $A(\phi,L)$ has all elements of the same color in $\chi$.
Since $|W| = R=R_h(F;q)$, for each label $\ell\in[T]$, the $q$-coloring $e\mapsto\chi(e,\ell)$ of $h$-sets in $W$ has a copy of $F$ given by an injection $\phi_\ell:[n]\longrightarrow W$, with all its hyperedges colored by some color $c_\ell \in [q]$.
Choose one such injection and color for each label.
There are at most $qa\binom Rn$ possible pairs $((\phi_\ell^{(h)}(f_1),\ldots,\phi_\ell^{(h)}(f_m)),c_\ell)$.
Thus, since $T=q(m-1)a\binom{R}{n}+1$, the pigeonhole principle implies that some pair $((\phi^{(h)}(f_1),\ldots,\phi^{(h)}(f_m)),c)$ occurs at least $m$ times.
Let its labels be $\ell_1<\cdots<\ell_m$.
Then every hyperedge $\phi^{(h)}(f_i)$ has color $c$ at each of those labels.
In particular, $A(\phi,\{\ell_1,\ldots,\ell_m\})$ has all elements of the same color $c \in [q]$ in $\chi$.

Thus, for each good $W$, we obtain an edge $A(\phi,L)$ of $\mathcal H$ contained in one $C_i$ and supported on $W$.
We now count incidences between good $R$-sets and such container edges.
Each edge of $\mathcal H$ has exactly $n$ ground vertices and thus belongs to exactly $\bin{N-n}{R-n}$ ground $R$-sets. 
Since there are more than $\frac{1}{2}\binom{N}{R}$ good $R$-sets, it follows that
\[
 \sum_{i=1}^q e(\mathcal H[C_i])
 >\frac{\frac{1}{2}\binom{N}{R}}{\binom{N-n}{R-n}}
 =\frac{\binom{N}{n}}{2\binom{R}{n}}.
\]
This lower bound remains valid even if container edges overlap, as the sum only counts them additional times. 
On the other hand, Property~\eqref{item-container2} of Theorem~\ref{thm-container}, $e(\mathcal H)=a\binom{N}{n}\binom{T}{m}$, and the choice~\eqref{eq-eta} of $\eta$ give
\[
 \sum_{i=1}^q e(\mathcal H[C_i]) \leq q\eta e(\mathcal{H})
= q\eta a\binom{N}{n}\binom{T}{m} = \frac{\binom{N}{n}}{4\binom{R}{n}},
\]
a contradiction.
\end{proof}

Finally, we show that at least one edge-ordering $K^<(\lambda)$ gives the desired result.

\begin{theorem}
\label{thm-explicit}
With $F^<,N,T$ as above, there is a labeling $\lambda \colon \binom{[N]}{h} \to [T]$ such that every $q$-coloring of $K^<(\lambda)$ contains a monochromatic copy of $F^<$ preserving its edge order.
\end{theorem}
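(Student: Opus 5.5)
We choose $\lambda$ uniformly at random, so that each hyperedge $e\in\binom{[N]}{h}$ independently receives a label $\lambda(e)\in[T]$. Then we show that with positive probability no $q$-coloring of $K^<(\lambda)$ avoids a monochromatic ordered copy of $F^<$.

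The first step is to translate a bad coloring into independent sets of $\mathcal H$. Identify $\lambda$ with its graph $\Gamma_\lambda=\{(e,\lambda(e)):e\in\binom{[N]}{h}\}\subseteq\Omega$. Suppose $c$ is a $q$-coloring of $K^<(\lambda)$ with no monochromatic copy of $F^<$. For each color $i$, let $I_i=\{(e,\lambda(e)):c(e)=i\}$. We claim that $I_i$ is independent in $\mathcal H$. Indeed, suppose some $A(\phi,L)\subseteq I_i$. Then $\lambda(\phi^{(h)}(f_j))=\ell_j$ for all $j$, and these labels are strictly increasing, so the tie-breaking rule plays no role. Hence $\phi^{(h)}(f_1)<\dots<\phi^{(h)}(f_m)$ in $K^<(\lambda)$, and this is a monochromatic order-preserving copy of $F^<$, a contradiction. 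By Property~\eqref{item-container1} of Theorem~\ref{thm-container}, each $I_i$ lies in some container $C_i\in\mathcal C$. Since the sets $I_i$ partition $\Gamma_\lambda$, we get $\Gamma_\lambda\subseteq\bigcup_i C_i$. Thus $\lambda$ is bad only if there exist $C_1,\dots,C_q\in\mathcal C$ with $\Gamma_\lambda\cap D=\emptyset$, where $D=\Omega\setminus\bigcup_i C_i$.

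The second step bounds this probability for a fixed $q$-tuple of containers. By Lemma~\ref{lem-missing}, $|D|\ge\binom{N}{h}/(2\binom{R}{h})$. Each $h$-set $e$ carries at most $T$ elements of $D$. Say $D$ meets $D_e=D\cap(\{e\}\times[T])$ for each $e$; then by independence
\[
\Pr[\Gamma_\lambda\cap D=\emptyset]=\prod_e\Bigl(1-\frac{|D_e|}{T}\Bigr)\le \exp\Bigl(-\frac{|D|}{T}\Bigr)\le\exp\Bigl(-\frac{\binom{N}{h}}{2T\binom{R}{h}}\Bigr).
\]

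The third step is a union bound over the at most $|\mathcal C|^q$ tuples. Theorem~\ref{thm-container}\eqref{item-container3} gives $\ln|\mathcal C|\le c_m\ln(1/\eta)\,v\tau\ln(1/\tau)$, where $v=T\binom{N}{h}$ and $\tau=N^{-1/(2m)}$, so $\ln(1/\tau)=\ln N/(2m)$. It therefore suffices to verify
\[
q c_m\ln(1/\eta)\,T\binom{N}{h}N^{-1/(2m)}\frac{\ln N}{2m}<\frac{\binom{N}{h}}{2T\binom{R}{h}},
\]
that is,
\[
q c_m T^2\binom{R}{h}\ln(1/\eta)\,\frac{\ln N}{m}<N^{1/(2m)}.
\]
Write $B=4qc_mT^2\binom{R}{h}\ln(1/\eta)$. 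By assumption $N\ge B^{4m}$, so $N^{1/(2m)}\ge B\cdot N^{1/(4m)}$. It then remains to check that $\ln N/(4m)<N^{1/(4m)}$, which holds because $\ln x<x$ with $x=N^{1/(4m)}$.

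The main obstacle is this final arithmetic: matching the $\ln(1/\tau)$ factor against the slack in the exponent $4m$, and using the exact form of $\eta$ in~\eqref{eq-eta}, which is the term appearing inside the definition of $N$. The rest is routine once the independence translation and the product bound are in place. Since the probability of a bad $\lambda$ is less than $1$, some $\lambda$ is good, which proves Theorem~\ref{thm-explicit}.
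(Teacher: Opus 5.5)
Your proposal is correct and follows the paper's proof: random labeling, color classes independent in $\mathcal H$ and hence inside containers, the product bound via Lemma~\ref{lem-missing}, and a union bound over $|\mathcal C|^q$ tuples. The only difference is cosmetic: you compute $\ln(1/\tau)=\ln N/(2m)$ exactly and use $\ln x<x$, while the paper uses $\ln(1/\tau)\le\sqrt{1/\tau}$, which leaves it a factor-$2$ slack in the final exponent.
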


\begin{proof}[Proof of Theorem~\ref{thm-explicit}]
To find a suitable labeling $\lambda \colon \binom{[N]}{h} \to [T]$, we use the probabilistic method.
For every $e\in\bin{[N]}h$, we choose $\lambda(e)$ independently uniformly at random from $[T]$, and we let
\[
 X=\left\{(e,\lambda(e)):e\in\bin{[N]}h\right\}\subseteq\Omega.
\]
Fix $C_1,\ldots,C_q\in\mathcal C$.
Let $d_e$ be the number of labels $\ell \in [T]$ with $(e,\ell)\notin \bigcup_iC_i$.
By Lemma~\ref{lem-missing}, $\sum_{e \in \binom{[N]}{h}} d_e \geq \frac{\binom{N}{h}}{2\binom{R}{h}}$.
Thus, by the choice of $\lambda$, we then get
\[
 \Pr\left[X\subseteq \bigcup_iC_i\right]=\prod_{e\in\binom{[N]}h}\left(1-\frac{d_e}{T}\right)
 \leq\exp\left(-\frac1T\sum_{e \in \binom{[N]}{h}} d_e\right)
 \leq\exp\left(-\frac{\binom{N}{h}}{2T\binom{R}{h}}\right).
\]
The first inequality also holds if some factor is zero.

Recall that $\tau = N^{-1/(2m)}$.
Thus, $\ln{(1/\tau)} \leq \sqrt{1/\tau}$ and we obtain from the choice of $N$ that
\[
 \tau\ln(1/\tau)\leq N^{-1/(4m)}\leq \left(4q c_mT^2\binom{R}{h}\ln(1/\eta)\right)^{-1}.
\]
Consequently, by Property~\eqref{item-container3} of Theorem~\ref{thm-container} and by $v=T\binom{N}{h}$,
\[
 q\ln|\mathcal C|
 \leq q c_m\ln(1/\eta)T\binom{N}{h}\tau\ln(1/\tau)
 \leq\frac{\binom{N}{h}}{4T\binom{R}{h}}.
\]
It thus follows from the union bound that
\begin{align*}
 \Pr\left[X\subseteq\bigcup_{i=1}^q C_i
 \text{ for some }C_1,\ldots,C_q\in\mathcal C\right]
 \leq |\mathcal C|^q\exp\left(-\frac{\binom{N}{h}}{2T\binom{R}{h}}\right)
 \leq\exp\left(-\frac{\binom{N}{h}}{4T\binom{R}{h}}\right)<1.
\end{align*}
Therefore, we can fix $\lambda$ such that the corresponding set $X$ is not contained in any such union.

Consider the edge-ordering $K^< = K^<(\lambda)$.
In every $q$-coloring of $K^<$, we regard the color classes as subsets $X_1,\ldots,X_q$ of $X$. 
If every $X_i$ were
independent in $\mathcal H$, each would lie in a container $C_i$ by Property~\eqref{item-container1} of Theorem~\ref{thm-container}, giving
$X\subseteq\bigcup_iC_i$, contrary to the choice of $X$. Thus some $X_i$
contains an edge $A(\phi,L)$ of $\mathcal H$. Its labels are pairwise distinct
and increase in exactly the prescribed edge order of $F^<$. Therefore
$\phi$ gives a monochromatic edge-order-preserving
copy of $F^<$ in $K^<$.
\end{proof}

To obtain Theorem~\ref{thm-main}, it remains to estimate the size of $N$ by straightforward calculations.

\begin{proof}[Proof of Theorem~\ref{thm-main}]
If $q=1$ or $m=1$, then $R_{\mathrm{edge}}(F^<;q)=n$, and the claimed bounds follow by taking $C$ and $A$ sufficiently large.
We may therefore assume that $q,m\geq2$.
First assume that $F^<$ has no isolated vertices, so $h+1\leq n\leq hm$.
All implicit constants below depend only on $h$ and $q$.
Let $R,T$, and $\eta$ be as above, and set
\[
 B=4q c_mT^2\binom{R}{h}\ln(1/\eta)
\]
and $N=\left\lceil B^{4m}\right\rceil$.
By Theorem~\ref{thm-explicit}, we have $R_{\mathrm{edge}}(F^<;q)\leq N$, and it remains to estimate $N$.

Since $R\geq n$ and $a\binom{R}{n}\leq n!\binom{R}{n}\leq R^n$, we have
\[
 T=q(m-1)a\binom{R}{n}+1
 \leq qma\binom{R}{n}
 \leq qmR^n,
\]
and hence $\ln T \in O(n\log R)$, as $m\leq\binom{n}{h}\leq n^h$.
Moreover, $c_m=800(m!)^3m$ gives $\ln c_m \in O(m\log m)$, while
\[
 \ln(1/\eta)
 =\ln\left(4qa\binom{T}{m}\binom{R}{n}\right)
 \leq\ln(4q)+m\ln T+n\ln R
 \in O(mn\log R).
\]
In particular,
$\ln\ln(1/\eta)=O(\log n+\log\log R)$.
Using also $\binom{R}{h}\leq R^h$, we obtain
\begin{align*}
 \ln B
 &=\ln(4q)+\ln c_m+2\ln T
   +\ln\binom{R}{h}+\ln\ln(1/\eta)\\
 &\in O(m\log m+n\log R).
\end{align*}
As $B>1$, it follows that
\begin{equation}\label{eq-main-size}
 \begin{aligned}
 \ln N
 &\leq\ln2+4m\ln B \in O(m^2\log m+mn\log R)\\
 &=O(n^{2h}\log n+n^{h+1}\log R).
 \end{aligned}
\end{equation}
Consequently, there is a constant $C=C(h,q)>0$ such that
\[
 R_{\mathrm{edge}}(F^<;q)\leq N
 \leq\left(2^{m^2\ln(m+1)}R_h(F;q)^{mn}\right)^C,
\]
which proves the first bound when $F^<$ has no isolated vertices.
Moreover, $n\leq hm$ and $m+1\leq2n^h\leq R^{h+1}$ imply $\ln N \in O(m^2\log R)$ by~\eqref{eq-main-size}, so increasing $C$ also gives $R_{\mathrm{edge}}(F^<;q)\leq R_h(F;q)^{C m^2}$.

For the remaining bounds in Theorem~\ref{thm-main}, recall the classical estimate
$R\leq R_h(n;q)\leq\tow_h(bn)$ for some constant $b=b(h,q)\geq1$~\cite{erdRado52,ramseyBook}.
If $h=2$, then $\ln R\in O(n)$, so \eqref{eq-main-size} gives
\[
 \ln N \in O(n^4\log n+n^4)=O(n^4\log n).
\]
Thus $N\leq2^{A n^4\ln n}$ for a suitable constant $A=A(2,q)$.

Now assume that $h\geq3$.
By \eqref{eq-main-size} and the classical estimate, there is a constant $D=D(h,q)\geq1$ such that
\[
 \log N\leq D n^{2h+1}\tow_{h-1}(bn).
\]
For $k\geq2$, $x\geq1$, and every nonnegative integer $t$, the definition of the tower function gives
$\tow_k(x+t)\geq2^t\tow_k(x)$.
Applying this with $k=h-1$ and
$t=\lceil\log(Dn^{2h+1})\rceil\in O(\log n)$, we obtain
\[
 \log N\leq\tow_{h-1}(bn+t)
 \leq\tow_{h-1}(An)
\]
for a sufficiently large constant $A=A(h,q)$.
Thus, $N\leq\tow_h(An)$.

Finally, assume that $F^<$ has $z>0$ isolated vertices, and let $F_0^<$ be obtained by deleting them.
Adjoining $z$ isolated vertices to a Ramsey host for $F_0^<$ gives
\[
 R_{\mathrm{edge}}(F^<;q)\leq R_{\mathrm{edge}}(F_0^<;q)+z.
\]
Since $R_h(F_0;q)\leq R_h(F;q)$, $|V(F_0)|\leq n$, and $z\leq n\leq R_h(F;q)$, the first two bounds extend to $F^<$ by increasing $C$.
The bounds in terms of $n$ similarly extend by increasing $A$.
\end{proof}

\paragraph{Remark.}
If $F^<$ in Theorem~\ref{thm-main} is complete, the same bounds hold when the monochromatic copy is additionally required to preserve an arbitrary prescribed vertex order.
For $q=1$ or $n\leq h$ this is immediate.
Otherwise, we label the vertices of $F^<$ increasingly by $[n]$ in that order and give the host $[N]$ its natural order.
In the construction above, restrict $\phi$ to the increasing bijection onto each $n$-set $U$ and use $a=1$ for this restricted family.
Since $F$ is complete, $R=R_h(n;q)$ still guarantees the copies used in Lemma~\ref{lem-missing}, and the remaining argument is unchanged.

\paragraph*{Use of AI-assisted technologies.}
The authors used ChatGPT-6 Astra to edit the paper to improve readability and to assist with the calculations in the proofs.
The authors independently checked all resulting mathematical arguments, rewrote the proofs to improve their clarity and readability, and take full responsibility for the final content.

%\bibliography{bibliography}
%\bibliographystyle{plainnat}
%\include{bibliography}
\printbibliography[heading=bibintoc]

\end{document}